\documentclass[11pt]{amsart}

\usepackage[T1]{fontenc}
\usepackage{lmodern}
\usepackage{microtype}
\usepackage{amsmath,amssymb}
\usepackage[colorlinks=true,linkcolor=blue,citecolor=blue,urlcolor=blue]{hyperref}

\hypersetup{
  pdftitle={Independence of permutation characters},
  pdfkeywords={finite group, permutation character, cyclic subgroup}
}

\newtheorem{theorem}{Theorem}[section]
\newtheorem{lemma}[theorem]{Lemma}
\newtheorem{corollary}[theorem]{Corollary}
\theoremstyle{definition}

\newtheorem{remark}[theorem]{Remark}

\title[Independence of permutation characters]{On the independence of permutation characters}

\author[M. Brescia]{Mattia Brescia}
\address{Dipartimento di Matematica e Applicazioni ``Renato Caccioppoli'', Universit\`a di Napoli Federico II, Complesso Universitario Monte S. Angelo, Via Cintia, Napoli, Italy}
\email{mattia.brescia@unina.it}

\author[E. Ingrosso]{Ernesto Ingrosso}
\address{Dipartimento di Matematica e Applicazioni ``Renato Caccioppoli'', Universit\`a di Napoli Federico II, Complesso Universitario Monte S. Angelo, Via Cintia, Napoli, Italy}
\email{ernesto.ingrosso2@unina.it}

\author[M. Trombetti]{Marco Trombetti}
\address{Dipartimento di Matematica e Applicazioni ``Renato Caccioppoli'', Universit\`a di Napoli Federico II, Complesso Universitario Monte S. Angelo, Via Cintia, Napoli, Italy}
\email{marco.trombetti@unina.it}

\subjclass[2020]{Primary 20C15; Secondary 20B05}
\keywords{Finite group, permutation character, cyclic subgroup, Kourovka Notebook}

\begin{document}

\begin{abstract}
Let $G$ be a finite group. For every subgroup $H\leq G$, let $\pi_H$ be the permutation character of the action of $G$ on the left cosets of $H$. We prove that the characters $\pi_H$, with $H$ running through representatives of the conjugacy classes of subgroups of $G$, are linearly independent if and only if $G$ is cyclic. In particular, no finite insoluble group has the property asked for in Kourovka Notebook Problem~11.9. The proof uses only the fixed-point formula for a coset action and an elementary triangular-matrix argument.
\end{abstract}

\maketitle

\section{Introduction}

Let $G$ be a finite group and let $H\leq G$. The group $G$ acts on the set of left cosets $G/H$ by left multiplication. The character of this permutation action is denoted by
$$
  \pi_H=\operatorname{Ind}_H^G(1_H).
$$
Thus $\pi_H(g)$ is simply the number of cosets fixed by the element $g$.

Kourovka Notebook Problem~11.9 asks the following.

\medskip

\noindent{\bf Problem}\quad{\it Does there exist a finite insoluble group $G$ for which the permutation characters $\pi_H$, with $H$ running through representatives of the conjugacy classes of subgroups of $G$, are linearly independent over $\mathbb C$?}

\medskip

This problem goes back to the eleventh issue of the
\emph{Kourovka Notebook}, published in 1990. It was attributed to
I.~I.~Pyatetskii-Shapiro and communicated to the Notebook by
Ya.~G.~Berkovich (see \cite[Problem~11.9]{Kourovka}).  The purpose of this note is to give a complete answer. In fact, we
prove the following stronger statement.

\medskip

\noindent{\bf Main Theorem}\quad{\it Let $G$ be a finite group. The permutation characters $\pi_H$, with $H$ running through representatives of the conjugacy classes of subgroups of $G$, are linearly independent over $\mathbb C$ if and only if $G$ is cyclic.}

\medskip

The main observation is simple. The value $\pi_H(g)$ depends only on the cyclic subgroup generated by $g$, up to conjugacy. Hence all permutation characters together can distinguish at most the conjugacy classes of cyclic subgroups. We then show that this upper bound is exact by evaluating the characters associated with cyclic subgroups on generators of those subgroups. The resulting matrix is triangular with non-zero diagonal.

\section{Permutation characters and cyclic subgroups}

Let $G$ be a finite group. For a subgroup $H\leq G$, let $\pi_H$ denote the permutation character of the action of $G$ on $G/H$.

For $g\in G$, a coset $xH$ is fixed by $g$ if and only if
$$
  gxH=xH.
$$
This is equivalent to $x^{-1}gx\in H$. Therefore
\begin{equation}\label{eq:fixed}
  \pi_H(g)
  =\bigl|\{xH\in G/H:x^{-1}gx\in H\}\bigr|.
\end{equation}

We shall use the following notation:
\begin{itemize}
  \item $s(G)$ is the number of conjugacy classes of all subgroups of $G$;
  \item $c(G)$ is the number of conjugacy classes of cyclic subgroups of $G$.
\end{itemize}

\begin{lemma}\label{lem:depends}
For every subgroup $H\leq G$, the value $\pi_H(g)$ depends only on the
conjugacy class of the cyclic subgroup $\langle g\rangle$.
\end{lemma}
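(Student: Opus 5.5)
The plan is to read everything off the fixed-point formula \eqref{eq:fixed}. I will show that the set of cosets counted there is determined by the subgroup $\langle g\rangle$ itself, and then that the count does not change when $\langle g\rangle$ is replaced by a conjugate. Combining the two facts, if $\langle g'\rangle$ is conjugate to $\langle g\rangle$, then $\pi_H(g')=\pi_H(g)$.

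\emph{Step 1: dependence only on $\langle g\rangle$.} For $x\in G$, the condition $x^{-1}gx\in H$ is equivalent to $x^{-1}\langle g\rangle x\le H$. One direction is trivial. For the other, note that $x^{-1}\langle g\rangle x=\langle x^{-1}gx\rangle$, and a subgroup contains a cyclic subgroup as soon as it contains a generator of it. So \eqref{eq:fixed} can be rewritten as
$$
  \pi_H(g)=\bigl|\{xH\in G/H : x^{-1}\langle g\rangle x\le H\}\bigr|.
$$
This condition is well defined on cosets: replacing $x$ by $xh$ with $h\in H$ conjugates by an element of $H$, which preserves containment in $H$. The right-hand side involves $g$ only through $C=\langle g\rangle$. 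Hence any two generators of the same cyclic subgroup give the same value.

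\emph{Step 2: invariance under conjugation.} Suppose $\langle g'\rangle=y^{-1}\langle g\rangle y$ for some $y\in G$. Consider the map $xH\mapsto y^{-1}xH$. It is a bijection of $G/H$. Moreover,
$$
(y^{-1}x)^{-1}\langle g\rangle (y^{-1}x)=x^{-1}\bigl(y\langle g\rangle y^{-1}\bigr)x ,
$$
so this map carries the set of cosets counted for the subgroup $y\langle g\rangle y^{-1}$ onto the set counted for $\langle g\rangle$. It remains to match the direction of conjugation with $\langle g'\rangle$, and the cleaner way is to send $xH\mapsto yxH$. Then
$$
(yx)^{-1}\langle g\rangle (yx)=x^{-1}\langle g'\rangle x .
$$
So $x^{-1}\langle g'\rangle x\le H$ if and only if $(yx)^{-1}\langle g\rangle(yx)\le H$. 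Since left multiplication by $y$ is a bijection on $G/H$, the two counts coincide.

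There is no real obstacle in either step. The only points needing care are the equivalence in Step 1, which rests on the subgroup generated by $x^{-1}gx$ being the conjugate of $\langle g\rangle$, and getting the direction of conjugation right in Step 2.

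Alternatively, Step 2 follows at once from the fact that $\pi_H$ is a character and hence a class function. Then only Step 1 is genuinely needed, because if $\langle g'\rangle=y^{-1}\langle g\rangle y$, then $g'$ generates the same cyclic subgroup as the conjugate $y^{-1}gy$ of $g$.
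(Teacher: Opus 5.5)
Your proposal is correct and follows the same route as the paper: you rewrite the condition $x^{-1}gx\in H$ from \eqref{eq:fixed} as $x^{-1}\langle g\rangle x\le H$, so that only the subgroup $\langle g\rangle$ enters the count. Your Step~2 makes explicit the conjugation-invariance that the paper asserts in one line, either through the bijection $xH\mapsto yxH$ or through the class-function property of $\pi_H$.
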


\begin{proof}
By \eqref{eq:fixed}, the coset $xH$ is fixed by $g$ if and only if $x^{-1}gx\in H$. Since $H$ is a subgroup, this is equivalent to
$$
  x^{-1}\langle g\rangle x\leq H.
$$
The condition therefore depends only on the conjugacy class of
$\langle g\rangle$.
\end{proof}

\begin{corollary}\label{cor:upper}
The vector space spanned over $\mathbb C$ by all the characters $\pi_H$ has dimension at most $c(G)$.
\end{corollary}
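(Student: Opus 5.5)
The corollary will follow from Lemma~\ref{lem:depends} by a linear-algebra argument. Choose representatives $C_1,\dots,C_{c(G)}$ of the conjugacy classes of cyclic subgroups of $G$, and for each $i$ fix a generator $g_i$ of $C_i$. Every $g\in G$ generates a cyclic subgroup $\langle g\rangle$, which is conjugate to exactly one $C_i$. By Lemma~\ref{lem:depends}, $\pi_H(g)=\pi_H(g_i)$ for every subgroup $H\leq G$. So each $\pi_H$ is determined by the vector
$$
  v_H=\bigl(\pi_H(g_1),\dots,\pi_H(g_{c(G)})\bigr)\in\mathbb C^{c(G)}.
$$

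Next consider the $\mathbb C$-linear map $\Phi$ from the space $V$ of class functions on $G$ spanned by all the $\pi_H$ to $\mathbb C^{c(G)}$, defined by $\Phi(\chi)=(\chi(g_1),\dots,\chi(g_{c(G)}))$. I would show that $\Phi$ is injective on $V$. Let $\chi=\sum_H a_H\pi_H$ be a linear combination with $\Phi(\chi)=0$. For an arbitrary $g\in G$, pick the index $i$ with $\langle g\rangle$ conjugate to $C_i$. Lemma~\ref{lem:depends} applies to each summand, so
$$
  \chi(g)=\sum_H a_H\pi_H(g)=\sum_H a_H\pi_H(g_i)=\chi(g_i)=0.
$$
Hence $\chi=0$. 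Because $\Phi$ is linear and injective on $V$, we get $\dim V\le c(G)$.

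There is no real obstacle here. The one point needing care is that Lemma~\ref{lem:depends} holds for each $H$ separately, so it passes to arbitrary linear combinations by linearity, and that every element of $G$ falls into exactly one of the $c(G)$ classes. Equivalently, every element of $V$ is constant on the $c(G)$ subsets of $G$ consisting of elements whose generated subgroups lie in a fixed conjugacy class. These subsets partition $G$, so $V$ lies inside a space of dimension $c(G)$.
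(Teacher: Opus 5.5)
Your proof is correct and is essentially the paper's argument. The paper observes that every $\pi_H$ lies in the $c(G)$-dimensional space of functions that are constant on elements generating conjugate cyclic subgroups; your injective evaluation map $\Phi$ into $\mathbb C^{c(G)}$ just gives explicit coordinates on that same space, as your closing remark already notes.
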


\begin{proof}
By Lemma~\ref{lem:depends}, every permutation character has the same value on two elements whenever the cyclic subgroups generated by those elements are conjugate. There are $c(G)$ possible conjugacy classes of cyclic subgroups. Hence the space of all functions with this property has dimension $c(G)$, and the span of the permutation characters is contained in that space.
\end{proof}

We now prove that the bound in Corollary~\ref{cor:upper} is attained.

\begin{lemma}\label{lem:independent-cyclic}
Choose one representative
$$
  C_1,\ldots,C_{c(G)}
$$
from each conjugacy class of cyclic subgroups of $G$. Then the characters
$$
  \pi_{C_1},\ldots,\pi_{C_{c(G)}}
$$
are linearly independent over $\mathbb C$.
\end{lemma}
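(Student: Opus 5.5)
The plan is to evaluate the characters $\pi_{C_i}$ on chosen generators and show that the resulting $c(G)\times c(G)$ matrix is triangular with non-zero diagonal. For each $j$, fix a generator $g_j$ of $C_j$, so that $C_j=\langle g_j\rangle$. Order the representatives so that $|C_1|\ge |C_2|\ge\cdots\ge |C_{c(G)}|$; any ordering by non-increasing order works. Consider the matrix
$$
  M=\bigl(\pi_{C_i}(g_j)\bigr)_{1\le i,j\le c(G)}.
$$
If $\sum_i a_i\pi_{C_i}=0$, then evaluating at each $g_j$ gives $a^{T}M=0$. So it will suffice to show that $M$ is invertible.

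The key steps are the following.
\begin{enumerate}
\item By \eqref{eq:fixed} and the argument of Lemma~\ref{lem:depends}, we have $\pi_{C_i}(g_j)\neq 0$ if and only if some conjugate $x^{-1}C_jx$ is contained in $C_i$.
\item Suppose $i\neq j$ and $\pi_{C_i}(g_j)\neq0$. Then $|C_j|\le |C_i|$. If equality held, we would get $x^{-1}C_jx=C_i$, so $C_i$ and $C_j$ would be conjugate, contradicting the choice of distinct representatives. Hence $|C_j|<|C_i|$, which forces $i<j$ in our ordering.
\item Consequently $M$ is upper triangular: $\pi_{C_i}(g_j)=0$ whenever $i>j$, including the case where two different classes have the same order. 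The diagonal entries satisfy $\pi_{C_j}(g_j)\ge 1$, because the trivial coset $C_j$ is fixed by $g_j\in C_j$.
\end{enumerate}
It follows that $\det M=\prod_j\pi_{C_j}(g_j)\neq 0$. Hence $a=0$, and the characters are linearly independent.

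The only point needing care is step 2, namely the treatment of distinct classes of cyclic subgroups of the same order. The order-sorting alone does not decide the relative position of such $i$ and $j$. The observation that containment combined with equal order forces equality, and hence conjugacy, shows that these off-diagonal entries vanish. So a non-increasing ordering suffices, and no finer partial-order refinement is needed.
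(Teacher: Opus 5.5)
Your proof is correct and is essentially the paper's argument. The paper orders the $C_i$ by non-decreasing order and gets a lower triangular matrix, while you reverse the order and get an upper triangular one; you handle equal orders the same way, since containment plus equal order forces conjugacy. The only other difference is that the paper computes the diagonal entry exactly as $|N_G(C_i):C_i|$, whereas you only note it is at least $1$ because the trivial coset is fixed, and that is all the argument needs.
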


\begin{proof}
Order the subgroups so that
$$
  |C_1|\leq |C_2|\leq\cdots\leq |C_{c(G)}|.
$$
For each $j$, choose a generator $g_j$ of $C_j$. Consider the square matrix
$$
  M=\bigl(\pi_{C_i}(g_j)\bigr)_{1\leq i,j\leq c(G)}.
$$

Suppose that $\pi_{C_i}(g_j)\neq0$. By \eqref{eq:fixed}, there is some
$x\in G$ such that $x^{-1}g_jx\in C_i$. Since $g_j$ generates $C_j$, we get
$$
  x^{-1}C_jx\leq C_i.
$$ It follows that $|C_j|\leq |C_i|$.

If $|C_j|=|C_i|$, then the inclusion above is an equality. Thus $C_j$ and $C_i$ are conjugate. Since we chose only one representative from each conjugacy class, this implies $i=j$.

Consequently,
$$
  \pi_{C_i}(g_j)=0 \qquad\text{whenever }j>i.
$$
Thus $M$ is lower triangular.

It remains to compute a diagonal entry. Since $g_i$ generates $C_i$, the condition $x^{-1}g_ix\in C_i$ is equivalent to
$$
  x^{-1}C_ix=C_i,
$$ that is, to $x\in N_G(C_i)$. Two elements of $N_G(C_i)$ determine the same coset of $C_i$ precisely when they differ by an element of $C_i$. Therefore
$$
  \pi_{C_i}(g_i)=|N_G(C_i):C_i|>0.
$$
All diagonal entries of $M$ are non-zero. Hence $M$ is invertible, and the characters $\pi_{C_1},\ldots,\pi_{C_{c(G)}}$ are linearly independent.
\end{proof}

\begin{theorem}\label{thm:rank}
The vector space spanned by all transitive permutation characters of $G$ has dimension $c(G)$.
\end{theorem}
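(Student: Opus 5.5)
The plan is to sandwich the dimension between two bounds already available.

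First, I will identify the transitive permutation characters with the characters $\pi_H$. Any transitive action of $G$ on a finite set $\Omega$ is isomorphic to the action on $G/H$, where $H$ is the stabilizer of a point $\omega\in\Omega$. The isomorphism sends $x\omega$ to $xH$, and it commutes with the $G$-action. Isomorphic actions have the same fixed-point counts, so the character of the action on $\Omega$ is exactly $\pi_H$. Hence the span of all transitive permutation characters equals the span of all the characters $\pi_H$ with $H\le G$. Since conjugate subgroups give isomorphic coset actions, it is enough to let $H$ run over a set of representatives.

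Upper bound: Corollary~\ref{cor:upper} shows that this span has dimension at most $c(G)$.

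Lower bound: each cyclic subgroup $C_i$ is in particular a subgroup, so $\pi_{C_i}$ is a transitive permutation character. Lemma~\ref{lem:independent-cyclic} gives $c(G)$ linearly independent characters $\pi_{C_1},\ldots,\pi_{C_{c(G)}}$ inside the span, so its dimension is at least $c(G)$. Combining the two bounds gives equality.

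There is no real obstacle here. The only point needing a short justification is the first step: every transitive permutation character has the form $\pi_H$. The orbit–stabilizer identification settles this.
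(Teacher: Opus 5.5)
Your proposal is correct and follows the paper's proof: the upper bound comes from Corollary~\ref{cor:upper} and the lower bound from Lemma~\ref{lem:independent-cyclic}. The paper leaves implicit that every transitive permutation character is some $\pi_H$; you justify this with the orbit--stabilizer identification, and that justification is right.
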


\begin{proof}
Corollary~\ref{cor:upper} gives the upper bound $c(G)$. On the other hand, Lemma~\ref{lem:independent-cyclic} gives $c(G)$ linearly independent permutation characters. Therefore the dimension is exactly $c(G)$.
\end{proof}

\section{Solution of the problem}

\begin{theorem}\label{thm:main}
Let $G$ be a finite group. The characters
$$
  \{\pi_H:H\leq G\text{ up to conjugacy}\}
$$
are linearly independent over $\mathbb C$ if and only if $G$ is cyclic.
\end{theorem}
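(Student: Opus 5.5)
The plan is to reduce the statement to a comparison of $s(G)$ and $c(G)$ via Theorem~\ref{thm:rank}. The family $\{\pi_H\}$, with $H$ running through representatives of the $s(G)$ conjugacy classes of subgroups, consists of $s(G)$ characters. It spans the space of all transitive permutation characters, which has dimension $c(G)$ by Theorem~\ref{thm:rank}. Hence the family is linearly independent if and only if $s(G)=c(G)$. Since every cyclic subgroup is a subgroup and conjugacy classes of cyclic subgroups are conjugacy classes of subgroups, we always have $c(G)\leq s(G)$. Equality therefore holds exactly when every subgroup of $G$ is cyclic.

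So the whole statement comes down to one fact: every subgroup of $G$ is cyclic if and only if $G$ is cyclic. The direction from left to right is immediate, taking the subgroup $G$ itself. For the other direction, every subgroup of a cyclic group is cyclic, which is standard. Then $s(G)=c(G)$, and the $s(G)$ characters span a space of dimension $s(G)$, so they are independent.

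For the converse, suppose $G$ is not cyclic. Then $G$ itself is a non-cyclic subgroup, so its conjugacy class $\{G\}$ is not counted in $c(G)$, and $s(G)\geq c(G)+1$. We then have $s(G)$ vectors in a space of dimension $c(G)<s(G)$, so they are dependent.

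I do not expect a real obstacle. The only point needing care is the bookkeeping: the count of distinct characters $\pi_H$ must really be $s(G)$ as a family, including possible repetitions. That is fine, because a family with a repeated member is dependent anyway, and the dimension argument shows dependence regardless. Optionally, I would record explicitly that $\pi_G=1_G$ can then be written as a combination of the $\pi_{C_i}$ by Lemma~\ref{lem:independent-cyclic}.
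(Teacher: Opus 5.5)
Your proof is correct and follows essentially the same route as the paper. You use Theorem~\ref{thm:rank} to compare $s(G)$ with $c(G)$, note that $c(G)\le s(G)$ with equality exactly when every subgroup is cyclic, and conclude via the subgroup $G$ itself; the only cosmetic difference is that in the cyclic case you get independence from the dimension count $s(G)=c(G)$, whereas the paper observes that the family is then exactly the one in Lemma~\ref{lem:independent-cyclic}, and these amount to the same thing.
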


\begin{proof}
Assume first that the characters are linearly independent. There are $s(G)$ of them, because we take one subgroup from each conjugacy class. By Theorem~\ref{thm:rank}, their span has dimension $c(G)$. Linear independence therefore gives
$$
  s(G)\leq c(G).
$$
Since every cyclic subgroup is a subgroup, we always have
$$
  c(G)\leq s(G).
$$
Hence $s(G)=c(G)$.

The conjugacy classes of cyclic subgroups form a subset of the conjugacy classes of all subgroups. Since the two sets have the same finite cardinality, every subgroup of $G$ is conjugate to a cyclic subgroup, and is therefore cyclic. In particular, the subgroup $G$ itself is cyclic.

Conversely, assume that $G$ is cyclic. Every subgroup of a cyclic group is cyclic. Hence the full family of characters is the family considered in Lemma~\ref{lem:independent-cyclic}, and is linearly independent.
\end{proof}

\begin{remark}
The proof does not use solubility. The obstruction is simply that permutation characters can distinguish no more than the conjugacy classes of cyclic subgroups, whereas the family in the problem is indexed by all conjugacy classes of subgroups.
\end{remark}

\bigskip\bigskip

\noindent{\bf Acknowledgements}\\ The authors are members of the non-profit association ``AGTA---Advances in Group Theory and Applications'' (www.advgrouptheory.com) and are supported by GNSAGA (INdAM).  Moreover, Brescia and Trombetti were supported by the FRA project FORMALG of the University of Naples Federico II (CUP E65F22000060001).

\end{document}